\documentclass[reqno,b5paper]{amsart}
\usepackage{amsmath,amssymb,amsbsy,amsfonts,amsthm,latexsym,amsopn,amstext,amsxtra,euscript,amscd,amsthm}

\setlength{\textwidth}{121.9mm}
\setlength{\textheight}{176.2mm}

\newtheorem{lemma}{Lemma}
\newtheorem{prop}{Proposition}
\newtheorem{theorem}{Theorem}

\newcommand{\Z}{\mathbb Z}
\newcommand{\Q}{\mathbb Q}
\newcommand{\C}{\mathbb C}
\newcommand{\ve}{\varepsilon}
\newcommand{\ka}{\epsilon}
\newcommand{\cc}{\kappa}

\newcommand{\acr}{\newline\indent}

\begin{document}

\title{Only finitely many Tribonacci Diophantine triples exist}

\author[C. Fuchs \and C. Hutle \and N. Irmak \and F. Luca \and L. Szalay]{Clemens Fuchs* \and Christoph Hutle* \and Nurettin Irmak** \and Florian Luca*** \and Laszlo Szalay****}

\address{\llap{*\,}University of Salzburg \acr Hellbrunner Str. 34/I \acr 5020 Salzburg \acr AUSTRIA}
\email{clemens.fuchs@sbg.ac.at, christoph.hutle@gmx.at}
\address{\llap{**\,}U.~Ni\c{g}de, TURKEY}
\email{irmaknurettin@gmail.com}
\address{\llap{***\,}U.~Witwatersrand, SOUTH AFRICA}
\email{florian.luca@wits.ac.za}
\address{\llap{****\,}U.~West Hungary, HUNGARY}
\email{szalay.laszlo@emk.nyme.hu}

\thanks{C.F. and C.H. were supported by FWF (Austrian Science Fund) grant No. P24574 and by the Sparkling Science project EMMA grant No. SPA 05/172.}
\subjclass[2010]{ Primary 11D72, 11B39; Secondary 11J87}
\keywords{Diophantine triples, Tribonacci numbers, Diophantine equations, application of the Subspace theorem}

\maketitle

\begin{abstract}
Diophantine triples taking values in recurrence sequences have recently been studied quite a lot. In particular the question was raised whether or not there are finitely many Diophantine triples in the Tribonacci sequence. We answer this question here in the affirmative. We prove that there are only finitely many triples of integers $1\le u<v<w$ such that $uv+1,uw+1,vw+1$ are Tribonacci numbers. The proof depends on the Subspace theorem.
\end{abstract}

\section{Introduction}\label{sec:Intro}

The theory of diophantine tuples has a long history and origins in work by Diophantus. The basic question is to construct sets of rationals or integers with the property that the product of any two of its distinct elements plus $1$ is a square. Many facts are known on this problem most notably Dujella's result (cf. \cite{duje0}) that there are at most finitely many sets of five positive integers with this property (that the product of any two distinct elements from the set plus $1$ is a square); further results and the attractive history of the problem can be found in \cite{duje}. Besides the main streamline several variations of this problem have been investigated.

The problem of finding bounds on the size $m$ for Diophantine $m$-tuples with values in linear recurrences is one such variation. The first general result
is due to Fuchs, Luca and Szalay \cite{FLSz}  and states that if $\{u_n\}_{n\ge1}$ is a binary recurrent sequence satisfying certain conditions, then there are at most finitely many triples
of positive integers $a < b < c$ such that $ab+1,$ $ac+1$ and $bc+1$ are all members of $\{u_n\}_{n\ge1}$.
The sequences of Fibonacci and Lucas numbers satisfy the conditions of the above theorem, and all Diophantine triples with values in the Fibonacci sequence
or in the Lucas sequence were computed in \cite{LSz1} and \cite{LSz2}, respectively. Later, in \cite{ISz1}, the method was extended to find all Diophantine triples with values
in a certain parametric family of Lucas sequences which includes the balancing sequence as a particular case.

All the previous works dealt with binary recurrences.
Concerning linear recurrences of higher order we mention \cite{RL} and \cite{RL1} in which it is shown that there are no Diophantine quadruples with values in the Tribonacci sequence $\{T_n\}_{n\ge 1}$ or in the shifted Tribonacci sequence $\{T_n+1\}_{n\ge 1}$.
Recall that this sequence has the property that $T_{0}=T_1=0,$ $T_2=1$ and $T_{n+3}=T_{n+2}+T_{n+1}+T_n$ for all $n\ge 0$. In \cite{RL}, they conjectured that only finitely many Diophantine triples  exist with values in the Tribonacci sequence, but by their method it was not possible to deal with this problem.

In this paper, we prove the above conjecture. More precisely, we have the following theorem.
\begin{theorem}\label{thm:main}
There are only finitely triples of integers $1\le u<v<w$ such that
\begin{equation}\label{eq:triples}
1+uv=T_x,\quad 1+uw=T_y,\quad 1+vw=T_z
\end{equation}
hold for some positive integers $x$, $y$, $z$.
\end{theorem}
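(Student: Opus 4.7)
The plan is to combine Binet's formula for the Tribonacci numbers with the Subspace theorem. Let $\alpha > 1$ and $\beta, \gamma$ (complex conjugates of absolute value less than $1$) be the roots of $X^3 - X^2 - X - 1$, so that $T_n = a\alpha^n + b\beta^n + c\gamma^n$ with coefficients $a,b,c$ in the splitting field $K = \Q(\alpha, \beta, \gamma)$. Since $\alpha\beta\gamma = 1$, the three roots are units in $\mathcal O_K$, hence $S$-units for $S$ the set of archimedean places of $K$ (enlarged by any finite places needed to make $a, b, c$ into $S$-integers). After disposing of the case where $x, y, z$ are bounded (finitely many triples), one may assume $z$, and hence also $y > x$, is large.

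From \eqref{eq:triples}, multiplying the three equations in pairs yields the key identity
\[
u^2(T_z-1) = (T_x-1)(T_y-1),
\]
together with its two analogues for $v^2$ and $w^2$. Asymptotically one obtains $u \sim \sqrt{a}\,\alpha^{(x+y-z)/2}$, $v \sim \sqrt{a}\,\alpha^{(x+z-y)/2}$, $w \sim \sqrt{a}\,\alpha^{(y+z-x)/2}$, which is consistent with the ordering $x < y < z$ and forces $z < x+y$ for large solutions.

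Substituting the Binet formula into the displayed identity and expanding produces a vanishing linear relation among $20$ terms: nine of the form $a_i a_j \alpha_i^x \alpha_j^y$ (with $(\alpha_1,\alpha_2,\alpha_3) = (\alpha,\beta,\gamma)$ and $(a_1,a_2,a_3) = (a,b,c)$), six of the form $a_i \alpha_i^x$ and $a_i \alpha_i^y$, three of the form $u^2 a_i \alpha_i^z$, and the two constants $1$ and $u^2$. Each such term is a constant in $K$ times a monomial in the $S$-units $\alpha, \beta, \gamma$, multiplied by either $1$ or $u^2$. Applying the $S$-unit form of the Subspace theorem (in the Evertse--Schlickewei--Schmidt formulation), one concludes that outside a finite exceptional set, some proper subsum of the $20$ terms must vanish. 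Invoking the two analogous equations for $v^2, w^2$ together with the identity $(uvw)^2 = (T_x-1)(T_y-1)(T_z-1)$ then permits one to eliminate the unknown squares $u^2, v^2, w^2$ and reduce each surviving vanishing subsum to a pure $S$-unit relation in $\alpha^x, \beta^x, \gamma^x, \alpha^y, \beta^y, \gamma^y, \alpha^z, \beta^z, \gamma^z$. Each such reduced relation either forces a linear equality among $x, y, z$ (hence finitely many choices) or contradicts the ordering $x < y < z$ and the asymptotic size constraints.

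I expect the principal obstacle to be the combinatorial bookkeeping of the possible vanishing subsums produced by the Subspace theorem, together with the correct treatment of the integer squares $u^2, v^2, w^2$, which are not a priori $S$-units and whose prime factorizations vary with the solution. The latter difficulty is addressed precisely by combining the three analogous equations so as to cancel these integer factors and produce relations purely among the $S$-units $\alpha^n, \beta^n, \gamma^n$ and the index variables; verifying that every resulting subcase admits only finitely many solutions, possibly via a further Subspace step or a direct Baker-type analysis, is the main technical work of the proof.
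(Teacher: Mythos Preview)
Your proposal has a genuine gap at its central step. The Evertse--Schlickewei--Schmidt theorem applies to linear equations $\sum c_i x_i = 0$ where the $x_i$ lie in a \emph{fixed finitely generated} multiplicative group. In your $20$-term relation, the four terms $u^2 a_i\alpha_i^z$ and $u^2$ involve the integer $u^2$, which varies with the solution and does not lie in any such group. You acknowledge this, but the proposed cure---combining the three analogous equations and the identity $(uvw)^2=(T_x-1)(T_y-1)(T_z-1)$---does not eliminate the problem: any product or quotient of those identities either reintroduces an unknown integer square or collapses back to one of the original equations $uv=T_x-1$, etc. There is no algebraic manipulation that turns your relation into a pure $S$-unit equation.

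There is a second gap in the endgame. A vanishing subsum yielding one linear relation among $x,y,z$ leaves a two-parameter family of $(x,y,z)$, not finitely many. Even after a further reduction to a one-parameter line, one still has to rule out that line; nothing in your sketch does this.

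The paper handles both issues differently. First it proves an arithmetic input, $\gcd(T_y-1,T_z-1)<\alpha^{3z/4}$, which forces $x\ge z/12$; this is needed so that the error in truncating the binomial expansion of
\[
u=\sqrt{a}\,\alpha^{(x+y-z)/2}\bigl(1+a_1\alpha^{-x}+\cdots\bigr)^{1/2}\cdots
\]
decays like $\alpha^{-cz}$. The Subspace theorem is then applied in its \emph{inequality} form (linear forms small at the places in $S$), not as an $S$-unit equation: the key point is that $u$ is an integer, so $\prod_{v\nmid\infty}|u|_v\le 1$, while $u$ minus the truncated sum is exponentially small at the real place. Subspace yields that $u$ (and likewise $v,w$) equals a \emph{finite} $K$-linear combination of monomials in $\alpha,\beta,\gamma$. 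Only now is $1+vw=T_z$ a genuine $S$-unit equation, and two applications of the $S$-unit theorem reduce to a single line $(x,y,z)=(r_1t+s_1,r_2t+s_2,r_3t+s_3)$. The final contradiction is algebraic: comparing leading terms forces $e_u^2=a$ or $e_u^2=a\alpha$ with $e_u\in K$, but one checks (e.g.\ in Magma) that neither $\sqrt{a}$ nor $\sqrt{a\alpha}$ lies in $K=\Q(\alpha,\beta)$. None of these three ingredients---the gcd bound, the inequality-form Subspace step to obtain a finite expansion of $u$, and the $\sqrt{a}\notin K$ contradiction---appears in your plan, and the first two are precisely what is needed to get around the fact that $u$ is not an $S$-unit.
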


First we recall some facts concerning the Tribonacci sequence $\{T_n\}_{n\ge 0}$.
The characteristic equation
$$
x^3-x^2-x-1=0
$$
has roots $\alpha,~\beta,~\gamma={\overline{\beta}}$, where
$$
\alpha=\frac{1+\omega_1+\omega_2}{3},\qquad \beta=\frac{2-\omega_1-\omega_2+{\sqrt{3}} i (\omega_1-\omega_2)}{6},
$$
and
$$
\omega_1=\sqrt[3]{19+3{\sqrt{33}}}\quad {\text{\textrm{and}}}\quad \omega_2=\sqrt[3]{19-3{\sqrt{33}}}.
$$
Further, Binet's formula is
\begin{equation}
\label{eq:Binet}
T_n=a\alpha^n+b\beta^n+c\gamma^n\quad {\text{\textrm{for all}}}\quad n\ge 0,
\end{equation}
where
\begin{equation}
\label{eq:coefficients}
a=\frac{1}{(\alpha-\beta)(\alpha-\gamma)},\quad b=\frac{1}{(\beta-\alpha)(\beta-\gamma)},\quad c=\frac{1}{(\gamma-\alpha)(\gamma-\beta)}={\overline{b}}
\end{equation}
(see \cite{S}). Numerically,
\begin{equation}
\label{eq:num}
\begin{split}
& 1.83<\alpha<1.84,\\
& 0.73<|\beta|=|\gamma|=\alpha^{-1/2}<0.74,\\
& 0.18<a<0.19,\\
& 0.35<|b|=|c|<0.36.
\end{split}
\end{equation}
Further,
\begin{equation}
\label{eq:BL}
\alpha^{n-3}\le T_n\le \alpha^{n-2}
\end{equation}
for all $n\ge 2$ (see \cite{BL}).

In Section \ref{sec:gcd}, we give an upper bound for the greatest common divisor (gcd) of $T_y-1$ and $T_z-1$. This result plays an important role in the proof of our Theorem \ref{thm:main}. Section \ref{sec:proof} proves Theorem \ref{thm:main} apart from Lemmas \ref{lem:1} and \ref{lem:2}, which are proved in Section \ref{sec:lemmas}. The main tools in the proof are the Subspace theorem (we use the version from \cite{Ev}) and the theory of $S$-unit equations (cf. \cite{ESS}), which we use to prove Lemma \ref{lem:1}.

\section{On the gcd of $T_y-1$ and $T_z-1$}
\label{sec:gcd}

We start with the following:

\begin{prop}
\label{prop:1}
If $4\le y<z$, then
$$
\gcd(T_y-1,T_z-1)<\alpha^{3z/4}.
$$
\end{prop}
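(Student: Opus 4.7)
The plan is to split into two cases depending on whether $y$ is small or large relative to $3z/4$.

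In the \emph{easy case} $y \le 3z/4 + 2$, the upper bound in \eqref{eq:BL} gives immediately
$$\gcd(T_y - 1, T_z - 1) \le T_y - 1 < \alpha^{y-2} \le \alpha^{3z/4}.$$

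In the \emph{hard case} $y \ge 3z/4 + 2$, I would run a norm argument in the cubic field $\mathbb{Q}(\alpha)$. Let $d = \gcd(T_y - 1, T_z - 1)$ and write $T_y - 1 = dm$, $T_z - 1 = dn$ with $m, n \in \mathbb{Z}$. The algebraic integer
$$E := \alpha^{z-y}(T_y - 1) - (T_z - 1) = d\bigl(\alpha^{z-y}m - n\bigr) \in \mathbb{Z}[\alpha]$$
is visibly divisible by $d$ in $\mathcal{O}_{\mathbb{Q}(\alpha)}$, so $d^3$ divides the rational integer $N(E)$, where $N = N_{\mathbb{Q}(\alpha)/\mathbb{Q}}$. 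Since $\alpha^{z-y} \notin \mathbb{Q}$ (as $\alpha$ is irrational and no positive power of $\alpha$ lies in $\mathbb{Q}$), we have $N(E) \ne 0$, hence $d^3 \le |N(E)|$. The heart of the argument is estimating $N(E) = E \cdot \sigma(E) \cdot \tau(E)$ for the two non-identity embeddings $\sigma: \alpha \mapsto \beta$ and $\tau: \alpha \mapsto \gamma$. Binet's formula \eqref{eq:Binet} produces the cancellation
$$\alpha^{z-y}T_y - T_z = b\beta^y\bigl(\alpha^{z-y} - \beta^{z-y}\bigr) + c\gamma^y\bigl(\alpha^{z-y} - \gamma^{z-y}\bigr),$$
which wipes out the dominant $a\alpha^z$, so \eqref{eq:num} yields the \emph{small} bound $|E| \le C_1 \alpha^{z-y}$ with a modest explicit constant. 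The complex conjugates allow no such cancellation but are bounded crudely by $|\sigma(E)|, |\tau(E)| \le C_2\alpha^{z-2}$ via $|\beta|, |\gamma| < 1$ and \eqref{eq:BL}. Multiplying and invoking the case assumption $y \ge 3z/4 + 2$,
$$d \le (C_1 C_2^2)^{1/3}\, \alpha^{(3z - y - 4)/3} \le (C_1 C_2^2)^{1/3}\, \alpha^{3z/4 - 2},$$
and a numerical check using \eqref{eq:num} shows $(C_1 C_2^2)^{1/3} < \alpha^2$, delivering $d < \alpha^{3z/4}$.

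The main obstacle is choosing the correct algebraic integer $E$: it must be visibly divisible by $d$ and simultaneously small at the real embedding while only trivially bounded at the two complex conjugate embeddings. The size asymmetry arises because $\alpha^{z-y}$ approximates $T_z/T_y$ to accuracy $O(\alpha^{-3y/2})$, which is precisely the cancellation Binet's formula produces at the real embedding; once this is in hand, the rest is bookkeeping with the explicit estimates in \eqref{eq:num} and \eqref{eq:BL}.
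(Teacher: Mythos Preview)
Your proof is correct and follows essentially the same strategy as the paper: split according to the size of $y$ relative to $z$, and in the hard case bound $d$ via the norm of the algebraic integer $\alpha^{z-y}(T_y-1)-(T_z-1)$, exploiting the Binet cancellation at the real place and the trivial bound at the others. The only difference is that you work (more economically) in the cubic field $\mathbb{Q}(\alpha)$, whereas the paper passes to the degree-$6$ splitting field $K=\mathbb{Q}(\alpha,\beta)$; since the element already lies in $\mathbb{Z}[\alpha]$, the sextic norm is just the square of the cubic norm and the resulting bound on $d$ is identical.
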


\begin{proof}
We may assume that $y\ge 5$, otherwise $y=4$, $T_4-1=1$, and there is nothing to prove.
Let $\cc\in (0,1)$ be some constant to be determined later. Let $d=\gcd(T_y-1,T_z-1)$. If $y\le \cc z+2$, then
\begin{equation}
\label{eq:c111}
d=\gcd(T_y-1,T_z-1)\le T_y-1<T_y\le \alpha^{y-2}\le \alpha^{\cc z}.
\end{equation}
From now on, we assume that $y>\cc z+2$. Let $\lambda=z-y<(1-\cc)z$. Then
\begin{equation*}\begin{split}
a \alpha^{y+\lambda}+b \beta^{y+\lambda}+c \gamma^{y+\lambda}-1 & \equiv   0\pmod d\\
a\alpha^y +b\beta^y +c\gamma^y-1 & \equiv  0\pmod d.
\end{split}\end{equation*}
Multiplying the second congruence above by $\alpha^{\lambda}$ and subtracting from the result the first congruence above gives
$$
b\beta^y (\alpha^{\lambda}-\beta^{\lambda})+c\gamma^y(\alpha^{\lambda}-\gamma^{\lambda})-(\alpha^{\lambda}-1)\equiv 0\pmod d.
$$
The left-hand side above is just $\alpha^{\lambda}(T_y-1)-(T_z-1)$. It is an algebraic integer which is not zero because otherwise we get that $\alpha^{\lambda}=(T_z-1)/(T_y-1)\in {\mathbb Q}$, which is false for $\lambda>0$. Thus,
\begin{equation}
\label{eq:1}
d\eta=b\beta^y (\alpha^{\lambda}-\beta^{\lambda})+c\gamma^y(\alpha^{\lambda}-\gamma^{\lambda})-(\alpha^{\lambda}-1)
\end{equation}
holds with some nonzero algebraic integer $\eta\in {K}:= {\mathbb Q}(\alpha,\beta)$.
The Galois group $G={\text{\textrm{Gal}}}({K}/{\mathbb Q})$ is $S_3$. Taking norms from ${K}$ to ${\mathbb Q}$ in \eqref{eq:1} gives
\begin{equation}
\label{eq:2}
\begin{split}
d^6  & \le   d^6|N_{K/{\mathbb Q}}(\eta)|=|N_{K/{\mathbb Q}}(d\eta)|\\
& =  \prod_{\sigma \in G} \big|\sigma(b) \sigma(\beta)^y(\sigma(\alpha)^{\lambda}-\sigma(\beta)^{\lambda})\\
& \hspace*{2cm} +\sigma(c) \sigma(\gamma)^y(\sigma(\alpha)^{\lambda}-\sigma(\gamma)^{\lambda})-(\sigma(\alpha)^{\lambda}-1)\big|.
\end{split}
\end{equation}
Now $G$ has two elements fixing $\alpha$ (the identity and the involution $(\beta$, $\gamma)$). The other four elements of $G$ map $\alpha$ into one of $\beta$, $\gamma$ and one of $\beta$, $\gamma$ into $\alpha$. Also, any element of $G$ induces a permutation of the coefficients $a,b,c$ (by \eqref{eq:coefficients}) in the Binet formula \eqref{eq:Binet} and $|b|=|c|>a$. We study the size of the factors in the product in the right-hand side of \eqref{eq:2}:
\begin{itemize}
\item[(i)] If $\sigma(\alpha)=\alpha$, then, by the absolute value inequality,
\begin{equation*}\begin{split}
&\left|\sigma(b) \sigma(\beta)^y(\sigma(\alpha)^{\lambda}-\sigma(\beta)^{\lambda})+\sigma(c) \sigma(\gamma)^y(\sigma(\alpha)^{\lambda}-\sigma(\gamma)^{\lambda})-(\sigma(\alpha)^{\lambda}-1)\right|\\
& \le  \alpha^{\lambda} (2|b| |\beta|^y)+2|b||\beta|^z+\alpha^{\lambda}-1\\
& =  \alpha^{\lambda}\left(1+\frac{2|b|}{|\alpha|^{y/2}}\right)+\left(\frac{2|b|}{|\alpha|^{z/2}}-1\right)<1.3\alpha^{\lambda}<1.3 \alpha^{(1-\cc)z}.
\end{split}\end{equation*}
Here, we used the fact that $y\ge 5$, so $z\ge 6$, and the numerics \eqref{eq:num}.
\item[(ii)] If $\sigma(\alpha)\ne \alpha$, it then follows that, by applying again the absolute value inequality,
\begin{equation*}\begin{split}
&\left|\sigma(b) \sigma(\beta)^y(\sigma(\alpha)^{\lambda}-\sigma(\beta)^{\lambda})+\sigma(c) \sigma(\gamma)^y(\sigma(\alpha)^{\lambda}-\sigma(\gamma)^{\lambda})-(\sigma(\alpha)^{\lambda}-1)\right|\\
& =  |\sigma(\alpha)^{\lambda}(T_y-1)-(T_z-1)|\le  T_z-1+|\alpha|^{-\lambda/2} (T_y-1)\\
& <  2T_z<(2\alpha^{-2}) \alpha^z<0.6 \alpha^z.
\end{split}\end{equation*}
Here, we used \eqref{eq:BL}.
\end{itemize}
With inequality \eqref{eq:2}, we get
$$
d^6<(1.3 \alpha^{(1-\cc)z})^2 (0.6 \alpha^z)^4<\alpha^{(6-2\cc)z},
$$
giving
\begin{equation}\label{eq:c2}
d<\alpha^{(1-\cc/3)z}.
\end{equation}
Taking $\cc=3/4$ to balance between \eqref{eq:c111} and \eqref{eq:c2}, we get $d<\alpha^{3z/4}$, which is what we wanted to prove.
\end{proof}

\section{Proof of Theorem 1}
\label{sec:proof}

We shall assume that there are infinitely many such triples $(u,v,w)$ with corresponding $(x,y,z)$ and eventually reach a contradiction. Solving for $u,v,w$ in terms of $x,y,z$ from \eqref{eq:triples}, we get
\begin{equation}
\label{eq:uvw}
u=\sqrt{\frac{(T_x-1)(T_y-1)}{T_z-1}}, v=\sqrt{\frac{(T_x-1)(T_z-1)}{T_y-1}}, w=\sqrt{\frac{(T_y-1)(T_z-1)}{T_x-1}}.
\end{equation}
Since $1\le u<v<w$, we get $5\le x<y<z$. Further, the case $z=7$ entails $(x,y)=(5,6)$, but then the corresponding $(u,v,w)$ in \eqref{eq:uvw} are not integers. Thus, $z\ge 8$.
Since $u\ge 1$, we get, by \eqref{eq:BL},
$$
\alpha^{x+y-4} \ge T_x T_y>(T_x-1)(T_y-1)\ge T_z-1\ge \alpha^{z-3}-1>\alpha^{z-4},
$$
giving $x+y>z$. Hence, $y>z/2$. Further,
$$
T_z-1\mid (T_y-1)(T_x-1).
$$
Let $d_1=\gcd(T_z-1,T_y-1)$ and $d_2=\gcd(T_z-1,T_x-1)$. So, $T_z-1\mid d_1 d_2$. Hence, by \eqref{eq:BL} and Proposition \ref{prop:1}, we have
$$
\alpha^{x-2} \ge T_x>T_x-1\ge d_2\ge \frac{T_z-1}{d_1}\ge \frac{\alpha^{z-3}-1}{\alpha^{3z/4}}>\alpha^{z/4-4},
$$
giving
\begin{equation}
\label{eq:xversusz}
x\ge z/4-2\ge z/12,
\end{equation}
at least for $z\ge 12$. Since there can be only finitely many such triples $(x,y,z)$ with $z<12$, we can assume $x\ge z/12$ for all the triples that we consider from now on.

We now use results from Diophantine approximations. Put
$$
a_1:=-1/a,\quad b_1:=b/a,\quad c_1:=c/a,
$$
and then
\begin{equation}\label{eq:c}\begin{split}
u & =  {\sqrt{a}} \alpha^{(x+y-z)/2} \left(1+a_1 \alpha^{-x}+b_1\beta^x \alpha^{-x}+c_1\gamma^x \alpha^{-x}\right)^{1/2}\nonumber\\
& \hspace{1cm}\cdot  \left(1+a_1\alpha^{-y}+b_1\beta^{y}\alpha^{-y}+c_1\gamma^{y}\alpha^{-y}\right)^{1/2}\nonumber\\
& \hspace{1cm}\cdot  \left(1+a_1\alpha^{-z}+b_1\beta^z \alpha^{-z}+c_1\gamma^{z}\alpha^{-z}\right)^{-1/2}\nonumber\\
& =  {\sqrt{a}} \alpha^{(x+y-z)/2} \left(1+\sum_{i\ge 1} d_{u,i} \alpha^{A_{u,i}({\bf x})} \beta^{B_{u,i}({\bf x})} \gamma^{C_{u,i}({\bf x})}\right),
\end{split}\end{equation}
where  ${\bf x}=(x,y,z)$ and for each $i\ge 1$ the numbers $d_{u,i}$ are some coefficients which are the product between some rational number (actually, a $2$-unit) and a monomial (with nonnegative degrees in each indeterminate) in $a_1,b_1,c_1$. Here we are taking the real positive root of $a$ and $\alpha$ respectively. The expansion is obtained as follows: We use the binomial series to get \begin{equation*}\begin{split}(1+&a_1\alpha^{-x}+b_1\beta^x\alpha^{-x}+c_1\gamma^x\alpha^{-x})^{1/2}\\ &=\sum_{k=0}^T{1/2\choose k}\left(a_1\alpha^{-x}+b_1\beta^x\alpha^{-x}+c_1\gamma^x\alpha^{-x}\right)^k+O\left(\alpha^{-(T+1)x}\right),\end{split}\end{equation*} where $O$ has the usual meaning (for the error term see e.g. \cite[Lemma 2]{ft}). Similarly we expand the terms $\left(1+a_1\alpha^{-y}+b_1\beta^{y}\alpha^{-y}+c_1\gamma^{y}\alpha^{-y}\right)^{1/2}$ and $\left(1+a_1\alpha^{-z}\right.$ $\left.+b_1\beta^z \alpha^{-z}+c_1\gamma^{z}\alpha^{-z}\right)^{-1/2}$ up to index $T$. By multiplying out the expressions we indeed get $$u=\sqrt{a}\alpha^{(x+y-z)/2}\left[1+\sum_{i=1}^{n-1}d_{u,i}\alpha^{A_{u,i}({\bf x})} \beta^{B_{u,i}({\bf x})} \gamma^{C_{u,i}({\bf x})}\right]+O(\alpha^{-Tx}),$$ where the integer $n$ depends only on $T$ and the terms are ordered in some arbitrary way. Since $x\ge z/12$ and $x<y<z$ we obtain \begin{equation}\label{err} u=\sqrt{a}\alpha^{(x+y-z)/2}\left[1+\sum_{i=1}^{n-1}d_{u,i}\alpha^{A_{u,i}({\bf x})} \beta^{B_{u,i}({\bf x})} \gamma^{C_{u,i}({\bf x})}\right]+O(\alpha^{-T\Vert {\bf x}\Vert /12}),\end{equation} where we have put $\Vert {\bf x}\Vert=z=\max\{x,y,z\}$. We shall use this bound later on.

Observe that $A_{u,i}({\bf x})$, $B_{u,i}({\bf x})$ and $C_{u,i}({\bf x})$ are linear forms in ${\bf x}$ with integer coefficients in such a way that the coefficients of $A_{u,i}({\bf x})$ are always $\le 0$ and the coefficients of $B_{u.i}({\bf x})$ and $C_{u,i}({\bf x})$ are always $\ge 0$.

We would like to use the Subspace Theorem (see e.g. \cite{Ev}; the version we are going to use can also be found in Section 3 of \cite{Fu2}, which's notation - in particular the notion of heights - we will also use) in order to show, that the expansion for $u$ is ``essentially'' finite (compare with \cite{Fu2,Fu5,Fu4}; however, observe that the results from these sources are not directly applicable and that we have to work through the estimates instead).

For this, we again take $K := \Q(\alpha, \beta)$ and choose for $S$ the set of all places (normalized as usual so that the Product Formula holds; see \cite{Ev,Fu2}), which are either infinite or in the set $\{ v \in M_K: \vert \alpha\vert_v \neq 1 \vee \vert \beta\vert_v \neq 1 \vee \vert \gamma\vert_v \neq 1\}$.

Using the fixed $n$ (depending on $T$) from above, we will now define $n+1$ linearly independent sets of linear forms with indeterminants $(U, Y_0, \dots, Y_{n-1})$ for each $v \in S$.

We have to distinguish two cases according to whether $x+y-z$ is even or odd. In other words we have either $x+y-z=2k$ or $x+y-z=2k+1$, respectively, which can be rewritten as $k=(x+y-z)/2$ resp. $k=(x+y-z-1)/2$ with $k\in \Z$. We put $\ka\in \{0,1\}$ to distinguish between these cases, whereby we put $\ka=0$ if $x+y-z=2k$ and $\ka=1$ if $x+y-z=2k+1$. By going to a still infinite subset of the solutions we may assume that $\ka$ is fixed.

For the standard infinite place $\infty$ on $\C$, we define
\begin{equation*}
l_{0, \infty}(U, Y_0, \dots, Y_{n-1}) := U - \sqrt{a\alpha^\ka} Y_0 - \sqrt{a\alpha^\ka} \sum_{i=1}^{n-1} d_{u,i} Y_i
\end{equation*}
and
$$
l_{i, \infty} := Y_{i-1} \quad \textrm{for } i \in \{1, \dots, n\}.
$$
For all other places $v$ in $S$, we define
$$
l_{0, v} := U, \qquad l_{i, v} := Y_{i-1} \quad \textrm{ for } i = 1, \dots, n.
$$
We will show, that there is some $\delta > 0$, such that the inequality
\begin{equation}\label{eq-sst}
\prod_{v\in S}\prod_{i=0}^{n}\frac{\vert l_{i,v}({\bf
y})\vert_{v}}{\vert {\bf y}\vert_{v}} <\left(\prod_{v\in S}\vert
\det(l_{0,v},\ldots,l_{n,v})\vert_{v}\right)\cdot \mathcal{H}({\bf
y})^{-n-1-\delta}
\end{equation}
is satisfied for all vectors
\begin{equation}\label{eq:sstvec}
\begin{split}
{\bf y}=(&u, \alpha^{(x+y-z-\ka)/2}, \alpha^{(x+y-z-\ka)/2} \cdot \alpha^{A_{u,1}(\bf x)} \beta^{B_{u,1}(\bf x)} \gamma^{C_{u,1}(\bf x)}, \\
&\ldots,\alpha^{(x+y-z-\ka)/2} \cdot \alpha^{A_{u,n-1}(\bf x)} \beta^{B_{u,n-1}(\bf x)} \gamma^{C_{u,n-1}(\bf x)}).
\end{split}
\end{equation}
Observe that it is here where we need the case distinction ($\ka=0$ resp. $\ka=1$) in order to assure that the vector ${\bf y}$ is in $K^{n+1}$.
Since by our choice of linear forms $\det(l_{0,v},\ldots,l_{n,v}) = 1$ for all places in $S$ and
$$
\prod_{v \in S}\prod_{i=0}^n \frac{1}{\vert {\bf y} \vert_v} = \mathcal{H}({\bf y})^{-n-1},
$$
this inequality reduces to
\begin{equation}\label{eq:sstnew}
\prod_{v\in S}\prod_{i=0}^{n} \vert l_{i,v}({\bf y})\vert_{v} < \mathcal{H}({\bf y})^{-\delta}.
\end{equation}
We split up the double product on the left-hand side into
$$
\left\vert u - \sqrt{a\alpha^\ka} y_0 - \sqrt{a\alpha^\ka} \sum_{i=1}^{n-1} d_{u,i} y_i \right\vert_\infty \cdot \prod_{\substack{v \in M_{K,\infty} \\ v \neq \infty}} \vert u \vert_v \cdot \prod_{v \in S \backslash M_{K,\infty}} \vert u \vert_v \cdot \prod_{i=0}^{n-1} \prod_{v \in S} \vert y_i \vert.
$$
Inserting the vector (\ref{eq:sstvec}), we see that $\prod_{v \in S \backslash M_{K,\infty}} \vert u \vert_v \le 1$, since $u$ is some integer. Furthermore, the last double product equals $1$ due to the Product Formula.
Furthermore, we estimate
\begin{equation*}
\begin{split}
\prod_{\substack{v \in M_{K,\infty} \\ v \neq \infty}} \vert u \vert_v &< \left( \frac{(T_x - 1)(T_y - 1)}{T_z - 1}\right)^3 \\
& \le  (a \alpha^x + b \beta^x + c \gamma^x - 1)^3 (a \alpha^y + b \beta^y + c \gamma^y - 1)^3 \\
&\le  \left( 4 \cdot 4^{\| {\bf x} \|} \right)^3.
\end{split}
\end{equation*}
And finally, the first expression is just
\begin{equation*}
\left\vert \sqrt{a\alpha^\ka}\alpha^{(x+y-z-\ka)/2} \sum_{i\geq n} d_{u,i} \alpha^{A_{u,i}(\bf x)} \beta^{B_{u,i}(\bf x)} \gamma^{C_{u,i}(\bf x)} \right\vert_\infty,
\end{equation*}
which, by (\ref{err}), is smaller than $C_1\alpha^{-T\| {\bf x}\|/12}$.
Thus
\begin{equation}\label{eq:lhs1}
\prod_{v\in S}\prod_{i=0}^{n} \vert l_{i,v}({\bf y})\vert_{v} < C_1 \cdot \alpha^{-T\|{\bf x}\|/12} \cdot \left( 4 \cdot 4^{\| {\bf x} \|} \right)^3.
\end{equation}
Now we choose $T$ (and the corresponding $n=n(T)$) large enough, such that
\begin{equation*}\begin{split}
\alpha^{-T/12} \cdot 64 &< \alpha^{-T/13}, \\
64 C_1 &< \alpha^{T/26}.
\end{split}\end{equation*}
Then (\ref{eq:lhs1}) implies
\begin{equation}\label{eq:lhs}
\prod_{v\in S}\prod_{i=0}^{n} \vert l_{i,v}({\bf y})\vert_{v} < \alpha^{-\frac{T}{26} \| {\bf x} \|}.
\end{equation}

Inserting the vector (\ref{eq:sstvec}) on the right-hand side of (\ref{eq:sstnew}), we find
\begin{equation*}\begin{split}
\mathcal{H}( {\bf y}) &\le  C_2 \cdot \mathcal{H}(u) \cdot \mathcal{H}(\alpha^{(x+y-z-\ka)/2})^n \cdot \prod_{i=1}^{n-1} \mathcal{H}(\alpha^{A_{u,i}(\bf x)} \beta^{B_{u,i}(\bf x)} \gamma^{C_{u,i}(\bf x)}) \\
&\le C_2 \cdot (4 \cdot 4^{\| {\bf x} \|}) \cdot \prod_{i=1}^{n-1} \alpha^{C_3 \| {\bf x}\|} \\
&\le \alpha^{C_4 \| {\bf x} \|},
\end{split}\end{equation*}
using suitable constants. For the second inequality, we used that $\mathcal{H}(u)$ is just the product $\prod_{v \in M_{K,\infty}} \vert u \vert_v$ (since $u$ is an integer), which allows a similar estimation as above, and also that
\begin{equation*}\begin{split}
\mathcal{H}(\alpha^{(x+y-z-\ka)/2})^n &\le \alpha^{n \|{\bf x} \|}, \\
\mathcal{H}(\alpha^{A_{u,i}(\bf x)} \beta^{B_{u,i}(\bf x)} \gamma^{C_{u,i}(\bf x)}) &\le \mathcal{H}(\alpha)^{C_\alpha \| {\bf x} \|} \mathcal{H}(\beta)^{C_\beta \| {\bf x} \|} \mathcal{H}(\gamma)^{C_\gamma \| {\bf x} \|},
\end{split}\end{equation*}
by using the maximum of those finitely many expressions for $i = 1, \dots, n$.
Therefore
\begin{equation}\label{eq:rhs}
\mathcal{H}( {\bf y})^{-\delta} \ge \alpha^{-\delta C_4 \| {\bf x} \|}.
\end{equation}

Now, because of (\ref{eq:lhs}) and (\ref{eq:rhs}) it just remains to pick $\delta$, such that
\begin{equation*}
\alpha^{-\frac{T}{26} \| {\bf x} \|} \le \alpha^{-\delta C_4 \| {\bf x} \|}
\end{equation*}
holds for all of our infinitely many solutions ${\bf x}$. This works, if we choose $\delta \le T/(26 C_4)$.

Thus we can apply the Subspace Theorem, which says, that all solutions $(x,y,z)$ of (\ref{eq-sst}) lie in finitely many proper linear subspaces. In particular, there are still infinitely many solutions in one of these proper subspaces, giving a finite set $I_u$ and (new) coefficients $e_u$ and $e_{u,i}$ for $i\in I_u$ in $K$ such that
\begin{equation}
\label{eq:u}
u=\alpha^{(x+y-z-\ka)/2} \left(e_u+\sum_{i\in I_u} e_{u,i} \alpha^{A_{u,i}({\bf x})} \beta^{B_{u,i}({\bf x})} \gamma^{C_{u,i}({\bf x})}\right)
\end{equation}
holds for still infinitely many $(x,y,z)$; here we eventually have to go to a still infinite subset.
A similar argument holds for the other two variables $v$ and $w$ and we get formulas
\begin{equation}\label{eq:vandw}\begin{split}
v & = \alpha^{(x-y+z-\ka)/2} \left(e_v+\sum_{i\in I_v} e_{v,i} \alpha^{A_{v,i}({\bf x})} \beta^{B_{v,i}({\bf x})} \gamma^{C_{v,i}({\bf x})}\right),\\
w & = \alpha^{(-x+y+z-\ka)/2} \left(e_w+\sum_{i\in I_w} e_{w,i} \alpha^{A_{w,i}({\bf x})} \beta^{B_{w,i}({\bf x})} \gamma^{C_{w,i}({\bf x})}\right),
\end{split}\end{equation}
which also holds for appropriate choice of the coefficients for infinitely many of the $(x,y,z)$.
So, we have arrived at the situation when there exist finite sets of indices $I_u,~I_v,~I_w$ and triples of linear forms $(A_{u,i},B_{u,i},C_{u,i})$ for $i\in I_u$,
$(A_{v,i},B_{v,u},C_{v,i})$ for $i\in I_v$ and $(A_{w.i},B_{w,i},C_{w,i})$ for $i\in I_w$, such that for infinitely many $(x,y,z)$ the quantities $u,v,w$ given by $\eqref{eq:u}$ and \eqref{eq:vandw}
fulfill \eqref{eq:triples}.

\begin{lemma}
\label{lem:1}
Suppose that (\ref{thm:main}) has infinitely many solutions. Then there exists a line in ${\mathbb R}^3$ given by
$$
x(t)=r_1 t+s_1,\quad y(t)=r_2 t+s_2,\quad z(t)=r_3 t+s_3
$$
with rationals $(r_1,r_2,r_3,s_1,s_2,s_3)$ such that infinitely many of the solutions $(x,y,z)$ to (\ref{thm:main}) are of the form  $(x(n),y(n),z(n))$ for some integer $n$.
\end{lemma}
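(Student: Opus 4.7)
The approach is to substitute the refined representations (\ref{eq:u}) and (\ref{eq:vandw}) of $u$, $v$, $w$ into the three identities $uv+1=T_x$, $uw+1=T_y$, $vw+1=T_z$ and then exploit the resulting $S$-unit equations over $K = \Q(\alpha,\beta)$. For instance, the first identity reads
$$\alpha^{x-\ka}\Bigl(e_u + \sum_{i\in I_u} e_{u,i}\alpha^{A_{u,i}}\beta^{B_{u,i}}\gamma^{C_{u,i}}\Bigr)\Bigl(e_v + \sum_{j\in I_v} e_{v,j}\alpha^{A_{v,j}}\beta^{B_{v,j}}\gamma^{C_{v,j}}\Bigr) + 1 = a\alpha^x + b\beta^x + c\gamma^x,$$
which, after expanding and moving all terms to one side, takes the form
$$\sum_{k=0}^{N} g_k\,\alpha^{P_k({\bf x})} \beta^{Q_k({\bf x})} \gamma^{R_k({\bf x})} = 0,$$
where the $g_k \in K^*$ are fixed constants and $P_k,Q_k,R_k$ are linear forms in ${\bf x}=(x,y,z)$ with rational coefficients. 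Each term is an $S$-unit of $K$ for the set $S$ from Section \ref{sec:proof}.

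By the theorem of Evertse--Schlickewei--Schmidt on $S$-unit equations (\cite{ESS}), a non-degenerate $S$-unit equation has only finitely many projective solutions. Since infinitely many ${\bf x}$ satisfy our equation, the pigeon-hole principle allows us, after passing to an infinite subset, to fix a partition $\{0,\ldots,N\} = \bigsqcup_\nu I_\nu$ with the property that for each $\nu$ the sub-sum $\sum_{k\in I_\nu} g_k\alpha^{P_k}\beta^{Q_k}\gamma^{R_k}$ vanishes and contains no proper vanishing sub-sub-sum. Each such non-degenerate vanishing sub-sum supports only finitely many projective tuples of $S$-units; passing to a further infinite sub-sequence, the pairwise ratios $\alpha^{P_k-P_{k'}}\beta^{Q_k-Q_{k'}}\gamma^{R_k-R_{k'}}$ with $k,k'\in I_\nu$ become constant. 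Because $\alpha\beta\gamma = 1$ (by Vieta applied to $x^3-x^2-x-1$) is, up to torsion, the only multiplicative relation between $\alpha,\beta,\gamma$ in $K^*$, the constancy of such a ratio amounts to two independent linear conditions on ${\bf x}$ with rational coefficients.

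Collecting all linear conditions produced in this way yields an affine subspace $V \subseteq \Q^3$ containing infinitely many solutions $(x,y,z)$; in particular $\dim V \geq 1$. Repeating the same analysis with the identities $uw+1 = T_y$ and $vw+1 = T_z$ provides further linear conditions, and the combined system forces $\dim V = 1$. Parameterizing the resulting line as $x(t) = r_1 t + s_1$, $y(t) = r_2 t + s_2$, $z(t) = r_3 t + s_3$ with $r_i, s_i \in \Q$, we conclude that infinitely many of the solutions $(x,y,z)$ are of the form $(x(n),y(n),z(n))$ for integers $n$, as claimed. The principal obstacle is the last step: one must verify that the linear conditions obtained from the three equations together actually cut the dimension down to $1$ rather than only to $2$. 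This is a dimension count relying on the rank-$2$ multiplicative group generated by $\alpha,\beta,\gamma$ (so that each matched pair of $S$-units yields two independent rational-linear constraints on ${\bf x}$), and one has to check that sufficiently many independent constraints emerge from the three equations taken together.
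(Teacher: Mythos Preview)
Your overall strategy---substitute the finite expansions into $uv+1=T_x$, $uw+1=T_y$, $vw+1=T_z$ and read off $S$-unit equations over $K=\Q(\alpha,\beta)$---matches the paper's, and your use of Evertse--Schlickewei--Schmidt together with the rank-$2$ multiplicative group generated by $\alpha,\beta,\gamma$ is correct. However, the step you yourself flag as the ``principal obstacle'' is a genuine gap, not just a bookkeeping issue.

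The problem is this. From a single non-degenerate vanishing sub-sum you obtain that certain ratios $\alpha^{P_k-P_{k'}}\beta^{Q_k-Q_{k'}}\gamma^{R_k-R_{k'}}$ are constant along an infinite subset of solutions. After eliminating $\gamma$ via $\alpha\beta\gamma=1$, multiplicative independence of $\alpha,\beta$ gives two \emph{a priori} linear conditions, but nothing guarantees they are independent: the two linear forms may well be proportional, so a single equation may only cut the dimension from $3$ to $2$. Worse, the three identities $uv+1=T_x$, $uw+1=T_y$, $vw+1=T_z$ are not algebraically independent sources of information (they come from the same underlying $(u,v,w)$), and there is no reason the linear constraints they produce should be transversal. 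It is entirely consistent with your argument that all constraints coming from all three equations cut out the \emph{same} hyperplane, leaving a $2$-parameter family. So the conclusion $\dim V=1$ is unproven.

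The paper avoids this by \emph{iterating} rather than working in parallel. It uses only the single identity $1+vw=T_z$, writes it as an $S$-unit equation $\sum_{i\in I} e_i\alpha^{K_i({\bf x})}\beta^{L_i({\bf x})}=0$ in three variables, and extracts one nontrivial linear relation $L({\bf x})=0$ (nontrivial because the pairs $(K_i,L_i)$ are distinct). This reduces to a two-parameter family $(x,y,z)=(r_1p+s_1q+t_1,\ldots)$. One then \emph{rewrites the same identity} in the new parameters $(p,q)$, obtaining a fresh $S$-unit equation $\sum_{j\in J} f_j\alpha^{M_j({\bf r})}\beta^{N_j({\bf r})}=0$, and applies Evertse--Schlickewei--Schmidt again to drop from two parameters to one. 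The point is that after re-parametrizing, the distinctness of the exponent pairs is restored in the new variables, so a further nontrivial constraint is guaranteed. Your parallel approach lacks this self-refreshing mechanism; to repair it you should replace the appeal to the three equations by this two-step descent on a single equation.
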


We shall give the prove of this lemma in Section \ref{sec:lemmas}.

Assume we have Lemma \ref{lem:1}. Then there are infinitely many $n$ such that $(x(n),y(n),z(n))$ satisfies our equations. Let $\Delta$ be the least common multiple of the denominators of $r_1,r_2,r_3$.
Infinitely many of our $n$ will be in the same residue class modulo $\Delta$, let us call it $r$. Then writing $n=m\Delta+r$, we get
$$
(x,y,z)=((r_1\Delta)m+(rr_1+s_1), (r_2\Delta) m+(rr_2+s_2),(r_3\Delta)m+(rr_3+s_3)).
$$
Since $r_i\Delta$ are integers for $i=1,2,3$, and $x,y,z$ are integers, we get that $r r_i+s_i$ are integers for $i=1,2,3$. Thus, replacing $n$ by $m$, $r_i$ by $r_i \Delta$ and $s_i$ by $r r_i+s_i$ for $i=1,2,3$, we may assume that $r_i$ and $s_i$ are all integers for $i=1,2,3$. Now
$$
\frac{x+y-z-\ka}{2}=\frac{(r_1+r_2-r_3)m}{2}+\frac{s_1+s_2-s_3-\ka}{2}.
$$
Since we have infinitely many $m$, it follows that infinitely many of them will be in the same residue class modulo $2$. Say this residue class is $\delta\in \{0,1\}$, and write $m=2\ell+\delta$. Then
$$
\frac{x+y-z-\ka}{2}=(r_1+r_2-r_3)\ell+S,
$$
where $S\in {\mathbb Z}$ or ${\mathbb Z}+1/2$. So, now
\begin{equation}
\label{eq:c1}
u(\ell)= \alpha^{(r_1+r_2-r_3)\ell+S} \left(e_u+\sum_{i\in I_u} e_{u,i} \alpha^{A_{u,i}({\bf x})} \beta^{B_{u,i}({\bf x})} \gamma^{C_{u,i}({\bf x})}\right)
\end{equation}
is an integer for infinitely many $\ell$, where here
$$
{\bf x}={\bf x}(\ell)=(x(2\ell+\delta),y(2\ell+\delta),z(2\ell+\delta)).
$$
From this, we will now derive a contradiction.

First we observe, that there are only finitely many solutions of (\ref{eq:c1}) with $u(\ell) = 0$. That can be shown by using the fact, that a simple non-degenerate linear recurrence has only finite zero-multiplicity (see \cite{ESS} for an explicit bound). We will apply this statement here for the linear recurrence in $\ell$; it only remains to check, that no quotient of two distinct roots of the form $\alpha^{A_{u,i}({\bf x(\ell)})} \beta^{B_{u,i}({\bf x(\ell)})}$ $\gamma^{C_{u,i}({\bf x(\ell)})}$ is  a root of unity or, in other words, that \begin{equation}\label{eq:rootunity1}
(\alpha^{m_\alpha} \beta^{m_\beta} \gamma^{m_\gamma})^n = 1
\end{equation}
has no solutions in $n \in \Z / \{0\}$, $m_\alpha < 0$ and $m_\beta, m_\gamma > 0$.
Assume relation \eqref{eq:rootunity1} holds. Suppose $n$ is even (if not replace \eqref{eq:rootunity1} by its square). Then we apply the complex conjugation automorphism, that invaries $\alpha$ and switches $\beta$ and $\gamma$ and we get
\begin{equation}
\label{eq:rootunity2}
(\alpha^{m_{\alpha}} \beta^{m_{\gamma}} \gamma^{m_{\beta}})^n=1.
\end{equation}
Multiplying \eqref{eq:rootunity1} and \eqref{eq:rootunity2} we get
$$
(\alpha^{2m_{\alpha}} (\beta\gamma)^{m_{\beta}+m_{\gamma}})^n=1.
$$
Since $\beta\gamma=-\alpha^{-1}$, and $n$ is even, we obtain
$$
\alpha^{(2m_{\alpha}-m_{\beta}-m_{\gamma})n}=1,
$$
and $2m_{\alpha}-m_{\beta}-m_{\gamma}<0$, a contradiction.

So now, we can assume, that $u(\ell) \neq 0$ for still infinitely many solutions. Using (\ref{eq:uvw}), we can write \begin{equation}\label{eq:expansions} (T_z - 1)u^2 = (T_x - 1)(T_y - 1) \end{equation} and insert the finite expansion (\ref{eq:c1}) in $\ell$ for $u$ into (\ref{eq:expansions}). Furthermore, we use the Binet formula (\ref{eq:Binet}) and write $T_x, T_y$ and $T_z$ as power sums in $x, y$, and $z$ respectively. Using the parametrization $(x,y,z) = (r_1 m + s_1, r_2 m + s_2, r_3 m + s_3)$ with $m = 2 \ell$ or $m = 2\ell + 1$ as above, we have expansions in $\ell$ on both sides of (\ref{eq:expansions}). Since there must be infinitely many solutions in $\ell$, the largest terms on both sides have to grow with the same rate. In order to find the largest terms, we have to distinguish some cases: If we assume, that $e_u \neq 0$ for infinitely many of our solutions, then $e_u \alpha^{(x+y-z-\ka)/2}$ is the largest term in the expansion of $u$ and we have $$ a\alpha^ze_u^2\alpha^{x+y-z-\ka}=a\alpha^xa\alpha^y.$$ It follows that $e_u^2=a\alpha^\ka$.
The case $e_u=0$ for infinitely many of our solutions is not possible, because the right-hand side of (\ref{eq:expansions}) would grow faster than the left-hand side so that (\ref{eq:expansions}) would be true for at most finitely many of our $\ell$.
In all other cases, we had $e_u = \sqrt{a\alpha^\ka}$, where $\ka\in\{0,1\}$. But this contradicts the following lemma.

\begin{lemma}
\label{lem:2}
${\sqrt{a}}\notin{K}$ and ${\sqrt{a \alpha}}\notin K$.
\end{lemma}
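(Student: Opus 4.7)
My plan is to attack this via Galois theory. The splitting field $K=\Q(\alpha,\beta)$ of the irreducible cubic $f(X)=X^3-X^2-X-1$ has Galois group $\operatorname{Gal}(K/\Q)\cong S_3$ over $\Q$. Since $S_3$ has a \emph{unique} subgroup of index $2$, the field $K$ contains exactly one quadratic subfield, namely $\Q(\sqrt{\operatorname{disc}(f)})=\Q(\sqrt{-44})=\Q(\sqrt{-11})$. My whole strategy is to derive from the hypothesis $\sqrt{a}\in K$ (or $\sqrt{a\alpha}\in K$) that $\sqrt{11}\in K$ as well, which is forbidden because $\Q(\sqrt{11})\neq\Q(\sqrt{-11})$ would exhibit a second quadratic subfield.

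First I would record the key arithmetic. From \eqref{eq:coefficients} one reads $a=1/f'(\alpha)$, with Galois conjugates $b=1/f'(\beta)$ and $c=1/f'(\gamma)$. A standard norm computation yields
$$
abc=N_{\Q(\alpha)/\Q}(a)=\frac{1}{|\operatorname{disc}(f)|}=\frac{1}{44},
$$
the positive sign being forced by $a>0$ and $bc=|b|^{2}>0$, cf.~\eqref{eq:num}. Vieta's relations give $\alpha\beta\gamma=1$, so the parallel identity $(a\alpha)(b\beta)(c\gamma)=1/44$ holds as well.

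Next I would run the main contradiction. Suppose $\sqrt{a}\in K$. The transposition $\sigma=(\beta\,\gamma)\in\operatorname{Gal}(K/\Q)$ fixes $a$, so $\sigma(\sqrt{a})=\pm\sqrt{a}$. In the ``$+$'' case $\sqrt{a}\in\Q(\alpha)$, and then $N_{\Q(\alpha)/\Q}(\sqrt{a})^{2}=1/44$ would make $1/44$ a rational square — impossible. Thus $\sigma(\sqrt{a})=-\sqrt{a}$, and applying the $3$-cycle $\rho=(\alpha\,\beta\,\gamma)$ to $\sqrt{a}$ produces elements of $K$ that square to $b$ and to $c$; so $\sqrt{b},\sqrt{c}\in K$ for suitable sign choices. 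The product $\sqrt{a}\sqrt{b}\sqrt{c}\in K$ then squares to $abc=1/44$, whence $\pm 1/(2\sqrt{11})\in K$ and therefore $\sqrt{11}\in K$, the forbidden conclusion. The assertion $\sqrt{a\alpha}\notin K$ will follow from the identical argument with $(a,b,c)$ replaced by $(a\alpha,b\beta,c\gamma)$, using $N_{\Q(\alpha)/\Q}(a\alpha)=N(a)N(\alpha)=1/44$ and $(a\alpha)(b\beta)(c\gamma)=1/44$.

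I do not anticipate any serious obstacle. The only subtle point is the sign dichotomy for $\sigma(\sqrt{a})$: the ``$+$'' branch requires the norm-non-square obstruction inside $\Q(\alpha)$, while the ``$-$'' branch requires the descent to $\sqrt{11}\in K$ via the product of Galois conjugates. The discriminant evaluation $\operatorname{disc}(f)=-44$ and the positivity $abc>0$ are short routine checks that I would confirm before writing the proof out in detail.
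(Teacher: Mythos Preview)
Your argument is correct and takes a genuinely different route from the paper. The paper's proof is purely computational: it writes $a$ and $a\alpha$ explicitly in terms of a primitive element $\ve$ of $K$ (a root of $x^{6}-x^{5}+2x^{4}-3x^{3}+2x^{2}-x+1$) and then invokes Magma's \texttt{SquareRoot} routine to certify that neither is a square in $K$. Your approach is Galois-theoretic: from $\operatorname{Gal}(K/\Q)\cong S_{3}$ and $\operatorname{disc}(f)=-44$ you know the unique quadratic subfield of $K$ is $\Q(\sqrt{-11})$, and the norm relation $abc=1/44$ (resp.\ $(a\alpha)(b\beta)(c\gamma)=1/44$, using $\alpha\beta\gamma=1$) lets you pull $\sqrt{11}$ into $K$ from the hypothesis $\sqrt{a}\in K$ (resp.\ $\sqrt{a\alpha}\in K$), forcing a second quadratic subfield. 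Your proof is self-contained and machine-free; the paper's is faster to carry out but opaque without the software.

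One minor simplification: the sign dichotomy on $\sigma(\sqrt{a})$ is unnecessary. The $3$-cycle argument you run in the ``$-$'' branch already works unconditionally: once $\sqrt{a}\in K$, the Galois images $\rho(\sqrt{a})$ and $\rho^{2}(\sqrt{a})$ lie in $K$ and square to $b$ and $c$, so their product with $\sqrt{a}$ squares to $abc=1/44$, yielding $\sqrt{11}\in K$ directly. The ``$+$'' branch (the norm obstruction inside $\Q(\alpha)$) is correct but redundant.
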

We will prove this lemma also in Section \ref{sec:lemmas}.
The contradiction proves our theorem.
\qed

\section{The proofs of Lemmas \ref{lem:1} and \ref{lem:2}}
\label{sec:lemmas}

We start with Lemma \ref{lem:2}.

\medskip
\noindent{\it Proof of Lemma \ref{lem:2}}.
We used the Magma Computational Algebra System to justify the statement.
In $\mathbb{Q}(\alpha)$ the coefficient
$$
a=\frac{1}{(\alpha-\beta)(\alpha-\gamma)}
$$
can be written as
$$
a=\frac{\alpha}{\alpha^2+2\alpha+3},
$$
while
$$
\alpha a=\frac{\alpha^2}{\alpha^2+2\alpha+3}.
$$

The splitting field $K=\mathbb{Q}(\alpha,\beta)$ is generated by a zero of the symmetric polynomial
\begin{equation}\label{split}
p(x)=x^6-x^5+2x^4-3x^3+2x^2-x+1.
\end{equation}
In order to determine the zeros of the above polynomial, it is easy to see, that
$$
x+\frac{1}{x}=\tau,
$$
where $\tau$ is $\alpha$ or $\beta$ or $\gamma$. Thus, denoting by $\ve$ a zero of (\ref{split}), we obtain
$$
\ve=\frac{\alpha+\sqrt{\alpha^2-4}}{2}.
$$
We have the representations
$$
\alpha=-\ve^5+\ve^4-2\ve^3+3\ve^2-\ve+1,
$$
further
$$
a=\frac{-5\ve^5+9\ve^4-14\ve^3+19\ve^2-13\ve+6}{22},
$$
and
$$
\alpha a=\frac{-2\ve^5-3\ve^4+\ve^3+\ve^2+8\ve-2}{22}.
$$
Finally, by testing $a$ and $\alpha a$ by the {\tt SquareRoot} function of Magma, we can conclude that neither $a$ nor $\alpha a$ is a square in $\mathbb{Q}(\alpha,\beta)$.
\qed\medskip

\noindent Let us continue with Lemma \ref{lem:1}.

\medskip

\noindent {\it Proof of Lemma \ref{lem:1}}.
Under the assumption that (\ref{eq:triples}) has infinitely many solutions, we have already deduced that there are finite sets $I_v,I_w$ and coefficients $e_v,e_w$ and $e_{v,i},e_{w,j}$ for $i\in I_v,j\in I_w$ in $K$ such that
\begin{equation*}\begin{split}
v & =  \alpha^{(x-y+z-\ka)/2} \left(e_v+\sum_{i\in I_v} e_{v,i} \alpha^{A_{v,i}({\bf x})} \beta^{B_{v,i}({\bf x})} \gamma^{C_{v,i}({\bf x})}\right),\\
w & =  \alpha^{(-x+y+z-\ka)/2} \left(e_w+\sum_{i\in I_w} e_{w,i} \alpha^{A_{w,i}({\bf x})} \beta^{B_{w,i}({\bf x})} \gamma^{C_{w,i}({\bf x})}\right).
\end{split}\end{equation*}
Since $1+vw=T_z=a\alpha^z+b\beta^z+c\gamma^z$, we get
\begin{equation}\label{eq:S}a\alpha^z+b\beta^z+c\gamma^z-\alpha^zS=1,\end{equation}
where $S$ is a certain finite sum of monomials.

This is an $S$-unit equation with certain coefficients and of a certain length where $S$ is the multiplicative group generated by $\{\alpha,\beta,-1\}$ inside the complex numbers (note that
$\gamma=-(\alpha \beta)^{-1}$). Replacing $\gamma=-\beta^{-1} \alpha^{-1}$, equation \eqref{eq:S} is of the form
\begin{equation}
\label{eq:non}
\sum_{i\in I} e_i \alpha^{K_i({\bf x})} \beta^{L_i({\bf x})}=0,
\end{equation}
where $I$ is some finite set, $e_i$ are nonzero coefficients and $K_i({\bf x})$ and $L_i({\bf x})$ are linear forms in ${\bf x}$ with integer coefficients. It is assumed of course that if $i\ne j$ then
$(K_i({\bf x}),L_i({\bf x}))\ne (K_j({\bf x}),L_j({\bf x}))$. Assume that $i\ne j$ are in the same non-degenerate component of a sub-equation of \eqref{eq:non}. Then
$\alpha^{L_i({\bf x})-L_j({\bf x})}\beta^{M_i({\bf x})-M_j({\bf x})}$ belongs to some finite list of numbers. Since $\alpha$ and $\beta$ are multiplicatively independent, if follows that $(K_i-K_j)({\bf x})=0$ and
$(L_i-L_j)({\bf x})=0$. Since at least one of these two forms is nonzero, it follows that we may assume that $L({\bf x})=0$ for some non-zero form $L$. Thus, there exist $r_i,s_i,t_i$ for $i=1,2,3$ such that
$$
x=r_1 p+s_1 q+ t_1,\qquad y=r_2 p+s_2 q+t_2,\qquad z=r_3 p+s_3 q+t_3.
$$
Let $\Delta$ be the least common multiple of the denominators of $r_i,~s_i$ for $i=1,2,3$ and let $p_0,~q_0$ be such that for infinitely many pairs $(p,q)$ we have $p\equiv p_0\pmod \Delta$ and $q\equiv q_0\pmod \Delta$.
Then $p=p_0+\Delta \lambda,~q=q_0+\Delta \mu$, and
\begin{equation*}\begin{split}
x & =  (r_1\Delta) \lambda+(s_1\Delta \mu)+(r_1p_0+s_1q_0+t_1)\\
y & =  (r_2\Delta) \lambda+(s_2\Delta \mu)+(r_2p_0+s_2q_0+t_2)\\
z & =  (r_3\Delta) \lambda+(s_3\Delta \mu)+(r_3p_0+s_3q_0+t_3).
\end{split}\end{equation*}
Since $r_i\Delta,~s_i\Delta$, $x,~y,~z$ are integers, we conclude that $r_i p_0+s_i q_0+t_i$ are also integers  for $i=1,2,3$. Thus, we may assume that in fact $r_i,s_i,t_i$ are all integers.
A similar argument now shows that our equation is of the form
$$
\sum_{i\in J} f_i \alpha^{M_i({\bf r})} \beta^{N_i({\bf r})}=0,
$$
where ${\bf r}=(\lambda,\mu)$, $J$ is some finite set of indices and $f_i$ are nonzero coefficients for $i\in J$. Again we may assume that $(M_i({\bf r}),N_i({\bf r}))\ne (M_j({\bf r}),N_j({\bf r}))$ for $i\ne j$ in $J$.  Applying again
the theorem on non-degenerate solutions to $S$-unit equations, we get that there exists some finite set of numbers $\Gamma$ such that for some $i\ne j$, the quantity
$$
\alpha^{(M_i-M_j)({\bf r})} \beta^{(N_i-N_j){\bf r}}\in \Gamma.
$$
This shows that each such ${\bf r}$ lies on a finite collection of lines, and since there are infinitely many ${\bf r}$ to start with,
we get that infinitely many of our ${\bf r}$ are on the same line.
This finishes the proof of the lemma.
\qed

\end{document}